\documentclass{emsprocart}

\newtheorem{theorem}{Theorem}[section]

\newtheorem{lemma}[theorem]{Lemma}

 %%%% for unnumbered statements

\theoremstyle{definition}

\newtheorem{remark}[theorem]{Remark}
 
%%%%%%%%%%%%%%%%
%%%%%%%%%% Use this for your addresse(s)
%%%%%%%%%% they will appear at the end of your article
%%%%%%%%%% use one \contact[]{} for each author

\contact[rbenguri@fis.puc.cl]{Rafael Benguria, Instituto de F\'isica, Pontificia Universidad Cat\'olica de Chile, Avda. Vicu\~na Mackenna 4860, Santiago, Chile}

\contact[benguria@math.wisc.edu]{Soledad Benguria, Mathematics Department, University of Wisconsin - Madison, 480 Lincoln Dr, Madison, WI, USA}
%\contact[e-mail address]{3. Author(s)'s address}

%%%%%%%%%%%

%%%%%%%%%%%%%%%%%%%%%%%%%%%%%%%%%%%%%%%%%%%%%%%%%%%%%%%%
% Commands and environments
%%%%%%%%%%%%%%%%%%%%%%%%%%%%%%%%%%%%%%%%%%%%%%%%%%%%%%%%

\title[Non--existence of radial solutions of the Brezis-Nirenberg problem in $\mathbb{H}^n$]{An improved bound for the non-existence of radial solutions of the Brezis-Nirenberg problem in $\mathbb{H}^n$}

\author[R.~D.~Benguria and S.~Benguria]{Rafael~D.~Benguria and Soledad~Benguria \thanks{The work of R.B.  has been supported by Fondecyt (Chile) Projects \# 112--0836 and \#114-1155 and by the N\'ucleo Milenio en  ``F\'\i  sica Matem\'atica'', RC--12--0002 (ICM, Chile).
S.B. would like to thank the E.~Schr\"odinger Institute in Vienna for their hospitality while part of this work was being done.}}

\begin{document}

\bigskip

\centerline{\it To Pavel Exner on the occasion of his 70th birthday}

\bigskip

\begin{abstract}
Using a Rellich--Pohozaev argument and Hardy's inequality, we derive an improved bound on  the nonlinear eigenvalue for the non existence of radial solutions  of a Brezis--Nirenberg problem, with Dirichlet boundary conditions,
on a geodesic ball of $\mathbb{H}^n$, for $2<n<4$.
\end{abstract}

\begin{classification}
Primary 35XX; Secondary 35B33; 35A24; 35J25; 35J60
\end{classification}

\begin{keywords}
Brezis--Nirenberg Problem, Hyperbolic Space, Nonexistence of Solutions, Pohozaev Identity, Hardy Inequality
\end{keywords}

\maketitle

\section{Introduction}

For a long time virial theorems have played a key role in the localization of linear and nonlinear eigenvalues. In the spectral theory of Schr\"odinger Operators, 
the virial theorem has been widely used to prove  the absence of positive eigenvalues for various multiparticle quentum systems (see, e.g., \cite{We67,Si74,Ba75}). 
In 1983,  Br\'ezis and Nirenberg  \cite{BrNi83} considered the existence and nonexistence of solutions  of the nonlinear equation 
$$
-\Delta u = \lambda u + |u|^{p-1} u, 
$$
defined on a bounded, smooth domain of $\mathbb{R}^n$, $n >2$, with Dirichlet boundary conditions, where $p=(n+2)/(n-2)$ is the critical Sobolev exponent. 
In particular, they used a virial theorem, namely the Pohozaev identity \cite{Po65}, to prove the nonexistence of regular  solutions when the domain is star--shaped, 
for any $\lambda \le 0$, in any $n >2$. After the classical paper \cite{BrNi83} of Br\'ezis and Nirenberg, many people have considered extensions of this problem in different settings. In 
particular, the Br\'ezis--Nirenberg (BN) problem has been studied on bounded, smooth, domains of the hyperbolic space $\mathbb{H}^n$ (see, e.g., \cite{St02,BaKa12,GaSa14,Be16}), 
where one replaces the Laplacian by the Laplace--Beltrami operator in $\mathbb{H}^n$. Stapelkamp \cite{St02} proved the analog of the above mentioned nonexistence result of 
Br\'ezis--Nirenberg in $\mathbb{H}^n$. Namely she proved that there are no regular solutions of the BN problem for bounded, smooth,  star--shaped domains in $\mathbb{H}^n$ ($n>2$), 
if $\lambda \le n(n-2)/4$. The purpose of this manuscript is to give an improved bound on $\lambda$ for the nonexistence of radial (not necessarily positive) radial, regular solutions 
of the BN problem on geodesic balls of $\mathbb{H}^n$ for $2<n<4$ (see Theorem 2.1 below). Notice that for the case of radial solutions of the BN problem on a geodesic ball one can consider noninteger values of $n$, which can be considered just as a parameter.  

\bigskip

Consider the Brezis--Nirenberg problem 
\begin{equation}\label{eq:BrNih}
-\Delta_{\mathbb{H}^n}u = \lambda u + |u|^{p-1} u,
\end{equation}
on $\Omega \subset \mathbb{H}^n,$ where $\Omega$ is smooth and bounded, with Dirichlet boundary conditions, i.e., $u=0$ in $\partial\Omega$.
After expressing the Laplace Beltrami operator $\Delta_{\mathbb{H}^n}$ in terms of the conformal Laplacian, Stapelkamp \cite{St02} 
proved  that (\ref{eq:BrNih}) does not admit any regular solution for star-shaped domains $\Omega$ provided 
\begin{equation}\label{eq:boundlambda}
\lambda \le \frac{n(n-2)}{4}.
\end{equation}
Here, we consider the BN problem ({\ref{eq:BrNih}) for radial solutions on geodesic balls of $\mathbb{H}^n$. 
We can prove a different bound, namely the problem for radial solutions on a geodesic ball $\Omega^*$ does not admit a solution if 
\begin{equation}
\lambda \le  \frac{n^2(n-1)}{4(n+2)}
\label{eq:ourbound}
\end{equation}
for $n>2$. Our bound is better than ({\ref{eq:boundlambda}) in the radial case, if $2<n<4.$ Both bounds coincide when $n=4$. 
In the rest of this manuscript we give the proof of (\ref{eq:ourbound}).

\section{Nonexistence of solutions of the BN problem on geodesic balls in $\mathbb{H}^n$, for $2<n<4$.}

In the sequel we consider (not necessarily positive) radial solutions of the BN problem (\ref{eq:BrNih}) on geodesic balls of $\mathbb{H}^n$. In {\it radial coordinates}, 
(\ref{eq:BrNih}) can be written as
\begin{equation}\label{eq:BrNigen}
-u''(x) - (n-1)\coth(x)u'(x) = \lambda u(x) + |u|^{p-1} u(x),
\end{equation}
with $u'(0) = u(R) = 0$, where $R$ is the {\it radius} of the geodesic ball.  Here, as before, $p=(n+2)/(n-2)$. Notice that (\ref{eq:BrNigen}) makes sense also if $n$ is not an integer. 
For that reason henceforth  we consider $n \in \mathbb{R}$, with $2<n<4$. Our main result is the following
\begin{theorem}
The Boundary Value problem (\ref{eq:BrNih}), with $u'(0)=u(R)=0$,   has no regular solutions if 
$$
\lambda \le \frac{n^2(n-1)}{4(n+2)}, 
$$
for $2<n<4$.
\end{theorem}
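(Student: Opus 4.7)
My plan is to combine a Rellich--Pohozaev identity for the radial ODE with a Hardy--Poincar\'e inequality on the geodesic ball. The first step is to rewrite (\ref{eq:BrNigen}) in Sturm--Liouville form,
\[
-\bigl(\sinh^{n-1}(x)u'(x)\bigr)'=\sinh^{n-1}(x)\bigl(\lambda u(x)+|u|^{p-1}u(x)\bigr),
\]
and derive two integral identities: a Pohozaev identity, by multiplying by $\sinh(x)u'(x)$ and integrating by parts twice on $(0,R)$, and a weighted energy identity, by multiplying by $\sinh^{n-1}(x)\cosh(x)u(x)$ and integrating. The boundary conditions $u'(0)=u(R)=0$ eliminate the boundary contributions at the origin and reduce those at $R$ to a multiple of $\sinh^n(R)(u'(R))^2$. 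Since $p=(n+2)/(n-2)$ is critical, the natural linear combination of these identities eliminates the $|u|^{p+1}$ integral and reproduces Stapelkamp's identity,
\[
\sinh^n(R)(u'(R))^2=2\Bigl(\lambda-\tfrac{n(n-2)}{4}\Bigr)\!\int_0^R\!\sinh^{n-1}(x)\cosh(x)u(x)^2\,dx,
\]
which already gives nonexistence for $\lambda\le n(n-2)/4$.

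To reach the improved bound I would invoke a Hardy--Poincar\'e inequality on the geodesic ball of the form
\[
\int_0^R \sinh^{n-1}(x)(u'(x))^2\,dx\;\ge\;\Lambda(n,R)\!\int_0^R\!\sinh^{n-1}(x)u(x)^2\,dx,
\]
with $\Lambda(n,R)$ controlled below by the hyperbolic Poincar\'e constant $(n-1)^2/4$. Combined with the unweighted energy identity $\int\sinh^{n-1}(u')^2\,dx=\lambda\int\sinh^{n-1}u^2\,dx+\int\sinh^{n-1}|u|^{p+1}\,dx$ (obtained by multiplying the equation by $\sinh^{n-1}(x)u(x)$), this gives the lower bound $\int\sinh^{n-1}|u|^{p+1}\,dx\ge (\Lambda-\lambda)\int\sinh^{n-1}u^2\,dx$ when $\Lambda>\lambda$. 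Feeding this, together with a Jensen/H\"older-type control relating the various weighted $L^2$ integrals of $u$, back into Stapelkamp's identity should force $\lambda>n^2(n-1)/(4(n+2))$ for any nontrivial radial solution. The algebraic identity
\[
\frac{n^2(n-1)}{4(n+2)}=(1-g)\,\frac{n(n-2)}{4}+g\,\frac{(n-1)^2}{4},\qquad g=\frac{n(4-n)}{n+2},
\]
exhibits the new bound as a convex combination of Stapelkamp's bound and the Poincar\'e constant, with mixing weight $g\in(0,1)$ precisely when $2<n<4$, which is a useful check on the expected form of the final estimate.

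The principal obstacle is the algebraic bookkeeping that matches the constants exactly so the combined inequality gives the sharp value $n^2(n-1)/(4(n+2))$ rather than some weaker threshold. The restriction $2<n<4$ is structural and not merely technical: the improvement over Stapelkamp's bound equals $n(4-n)/[4(n+2)]$, which is strictly positive on $(2,4)$, vanishes at $n=4$ (where the two bounds coincide), and is negative for $n>4$, so the Hardy-based refinement only gives a genuine improvement in the stated range of $n$.
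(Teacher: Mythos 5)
Your first half is sound: multiplying the Sturm--Liouville form of (\ref{eq:BrNigen}) by $\sinh(x)u'(x)$ and by $\sinh^{n-1}(x)\cosh(x)u(x)$, and using criticality of $p$ to cancel the $|u|^{p+1}$ integrals, does yield the identity $\sinh^n(R)u'(R)^2=2\bigl(\lambda-\tfrac{n(n-2)}{4}\bigr)\int_0^R\sinh^{n-1}(x)\cosh(x)u^2\,dx$, recovering Stapelkamp's threshold. The gap is in the second half. Once you have reduced everything to that identity, the nonlinear term is gone and the whole quantity $\lambda-\tfrac{n(n-2)}{4}$ is expressed as a \emph{boundary} term divided by $2\int_0^R\sinh^{n-1}\cosh(x)\,u^2\,dx$. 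To push the threshold up to $n^2(n-1)/(4(n+2))$ you would need a strictly positive lower bound on $\sinh^n(R)u'(R)^2\big/\int_0^R\sinh^{n-1}\cosh(x)\,u^2\,dx$, and nothing in your plan produces one: the Poincar\'e--energy combination only gives $\int\sinh^{n-1}|u|^{p+1}\ge(\Lambda-\lambda)\int\sinh^{n-1}u^2$, which involves neither the boundary term nor the $\cosh$-weighted norm, and the ``Jensen/H\"older-type control'' that is supposed to close the loop is precisely the missing argument. The convex-combination identity with weight $g=n(4-n)/(n+2)$ is a numerological consistency check, not an inequality chain; a convex combination of ``$\lambda\ge n(n-2)/4$'' with an estimate that does not itself assert ``$\lambda\ge(n-1)^2/4$'' does not yield the convex combination of the two bounds.

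The paper's mechanism is different in a way that matters. It uses the Pohozaev multiplier $G(x)u'$ with $G(x)=\int_0^x\sinh^{n-1}(s)\,ds$ (the volume function) rather than $\sinh(x)u'$; after eliminating $\int|u|^{p+1}\sinh^{n-1}\,dx$ one is left with an \emph{interior} inequality, $(n-1)\int_0^R u'^2 L\,dx\le\frac{\lambda}{n}\int_0^R u^2G'\,dx$ with $L=G\coth(x)-G'/n\ge0$, the boundary term being simply discarded because of its sign. The improvement then comes from two ingredients your sketch does not contain: the weighted one-dimensional Hardy inequality $\int_0^R u^2G'\,dx\le4\int_0^R u'^2G^2/G'\,dx$ (a short Cauchy--Schwarz argument), and the pointwise comparison $L(x)\ge\frac{n}{n+2}\,G^2/G'$ (Lemma \ref{lem:desigualdad}), whose proof is where the critical exponent enters a second time. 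The constant $(n-1)^2/4$ never appears. To salvage your route you would have to replace the Poincar\'e step by a genuine lower bound on the normalized boundary derivative, which is a substantially harder problem than the one the paper actually solves.
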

\begin{remark}
Notice that our bound $n^2(n-1)/(4(n+2))$ is strictly bigger than $n(n-2)/4$ for $n <4$. Notice, on the other hand, that Stapelkamp's bound holds for all regular solutions, 
while our improved bound only holds for regular radial solutions. We do not know whether our bound is optimal, i.e., we do not know if there are solutions for 
$\lambda > n^2(n-1)/(4(n+2)$. In view of \cite{Be16}, there can be no positive solutions if $\lambda < \mu(n)$ (see \cite{Be16} for the definition of $\mu(n)$). It is 
important to notice that for $n=4$, we have that, 
$$
\frac{n(n-2)}{4} = \frac{n^2(n-1)}{4(n+2)} = \mu (4), 
$$
so at least our result is optimal as $n \to 4$. 
\end{remark}
\begin{proof}
We use a Rellich--Pohozaev argument \cite{Re40,Po65}. Multiplying equation (\ref{eq:BrNigen}) by $u(x)\sinh^{n-1}(x)$ and integrating, we obtain
\begin{equation}
\begin{split} & -\int_0^R u''(x)(u(x)\sinh^{n-1}(x))\, dx - (n-1)\int_0^R u(x)u'(x)\cosh(x)\sinh^{n-2}(x)\, dx\\ &= 
\lambda\int_0^R u^2 \sinh^{n-1}(x)\, dx + \int_0^R {|u(x)|}^{p+1}\sinh^{n-1}(x)\, dx.\\
\end{split}
\end{equation}
Integrating the first term by parts, we can write this equation as 
\begin{equation}\label{eq:Po1}
\int_0^R u'^2 \sinh^{n-1}(x)\, dx = \lambda \int_0^R u^2 \sinh^{n-1}(x)\, dx +\int_0^R {|u|}^{p+1}\sinh^{n-1}(x)\, dx.
\end{equation}

Now let $G(x) = \displaystyle\int_0^x \sinh^{n-1}(s)\, ds. $ Multiplying equation (\ref{eq:BrNigen}) by $u' G$ and integrating, we obtain 

\begin{equation*}\label{eq:21}\begin{split}
&-\int_0^R \left(\frac{u'^2}{2} \right)'G \, dx - (n-1)\int_0^R \coth(x)u'^2G\, dx\\&  = \lambda \int_0^R\left(\frac{u^2}{2}\right)'G\, dx + \int_0^R \left(\frac{{|u|}^{p+1}}{p+1}\right)'G\, dx. \\
\end{split}\end{equation*}

\noindent After integrating by parts, and since $G(0)=0,$ we obtain 

\begin{equation}\label{eq:Po2}\begin{split}
& \frac{u'^2(R)G(R)}{2} +\int_0^R u'^2\left((n-1)G\coth(x) - \frac{G'}{2}\right)\, dx \\& = \lambda\int_0^R \frac{u^2}{2}G' \, dx + \frac{1}{p+1}\int_0^R {|u|}^{p+1}G'\, dx.\\
\end{split}
\end{equation}
Substituting equation (\ref{eq:Po1}) into equation (\ref{eq:Po2}), and since $1/2 - 1/(p+1) = 1/n,$ it follows that 
\begin{equation}\label{eq:Po}\begin{split}
&\int_0^R u'^2\left((n-1)G\coth(x) - \frac{G'}{2} - \frac{G'}{p+1}\right)\, dx +  \frac{u'^2(R)G(R)}{2} \\& = \frac{\lambda}{n}\int_0^R u^2 \sinh^{n-1}(x)\, dx.\\
\end{split}
\end{equation}
Notice that in equation (\ref{eq:cuociente}) we have written $\sinh^{n-1}(x)$ as $G'(x).$ Thus, since the boundary term is positive, and since $1/2 + 1/(p+1) = (n-1)/n,$ we have 
\begin{equation}\label{eq:cuociente}
\lambda \ge \frac{n(n-1)\displaystyle\int_0^R u'^2\left(G\coth(x) - \frac{G'}{n} \right)\, dx}{\displaystyle\int_0^R u^2 G'\, dx}.
\end{equation}
Now let $L(x) = G\coth(x) - \dfrac{G'}{n}.$ Then $L \ge 0.$ In fact, we can write $L(x) = m(x)/\sinh(x),$ where 
$$
m(x) = G\cosh(x) -\sinh^n(x)/n.
$$
Then, since $G(0),$ we have $m(0) = 0$.  Also, 
$$
m'(x)  = G\sinh (x) + G'\cosh(x) - \sinh^{n-1}(x)\cosh(x) = G\sinh(x).
$$
It follows that $m'\ge0,$ and therefore that $L\ge 0.$ 

\bigskip

We now use a Hardy type  inequality to write the denominator integral in terms of $u'^2$. For a review on Hardy's inequa;ities see, e.g., 
\cite{OpKu90,Da99}. Integrating by parts, we can write 
$$
\int_0^R u^2 G'\, dx = -2\int_0^R \left( u \sinh^{\frac{n-1}{2}}(x)\right) \left(\frac{Gu'}{\sinh^\frac{n-1}{2}}(x)\right)\, dx.
$$ 
Then, usiong Cauchy-Schwarz, it follows that 
$$ 
\left(\int_0^R u^2 G' \, dx \right)^2 \le 4\int_0^R u^2 G'\, dx \int_0^R \frac{G^2u'^2}{G'}\, dx.
$$ 
That is, 
\begin{equation}\label{eq:Hardy}
\int_0^R u^2 G' \, dx \le 4 \int_0^R \frac{u'^2G^2}{G'}\, dx.
\end{equation}
Using inequality (\ref{eq:Hardy}) in the quotient (\ref{eq:cuociente}),  we conclude that 
\begin{equation}\label{eq:cuoHarf}
\lambda \ge \frac{n(n-1)\displaystyle\int_0^R u'^2\left(G\coth(x) - \frac{G'}{n}\right)\, dx}{4 \displaystyle\int_0^R \frac{u'^2G^2}{G'}\, dx}.
\end{equation}
In the Lemma \ref{lem:desigualdad} below, we show that $L(x) \ge c \dfrac{G^2}{G'},$ where $c = \dfrac{n}{n+2}.$ With this, we conclude that 
\begin{equation}
\lambda \ge \frac{n^2(n-1)}{4(n+2)}.
\end{equation}
\end{proof}

\bigskip

\begin{lemma}\label{lem:desigualdad}
Let $x\ge 0$ and let $L(x) = G\coth(x) - \dfrac{G'}{n}.$ Then 
$$
L(x) \ge \frac{n}{(n+2)} \frac{G^2}{G'}.
$$ 
Here, as above, $G(x) = \displaystyle\int_0^x \sinh^{n-1}(s)\, ds.$ 
\end{lemma}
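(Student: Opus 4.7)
The plan is to work with the equivalent inequality
$$\Phi(x):=L(x)G'(x)-\frac{n}{n+2}G(x)^2\ge 0,$$
which, since $G'(x)=\sinh^{n-1}(x)>0$, is indeed equivalent to the statement of the lemma. Near the origin both $LG'$ and $\frac{n}{n+2}G^2$ scale like $x^{2n}/[n(n+2)]$, so $\Phi(0)=0$ and it is enough to prove $\Phi'(x)\ge 0$ on $[0,\infty)$.

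First I would differentiate $\Phi$, using $G''(x)=(n-1)G'(x)\coth(x)$ and the identity $\cosh^2(x)-\sinh^2(x)=1$. After a careful but routine algebraic simplification I expect the clean factorization
$$\Phi'(x)=(n-2)\,G'(x)\,k(x),\qquad k(x):=L(x)\coth(x)-\frac{G(x)}{n+2}.$$
Since $n>2$ and $G'>0$, everything then reduces to showing $k(x)\ge 0$ on $[0,\infty)$.

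To remove the apparent singularity of $\coth(x)$ at the origin, I would next multiply $k$ by the positive factor $(n+2)\sinh^2(x)$ and work with
$$\tilde k(x):=(n+2)m(x)\cosh(x)-G(x)\sinh^2(x),$$
where $m(x)=G(x)\cosh(x)-\sinh^n(x)/n=\sinh(x)L(x)$ is precisely the auxiliary function already introduced in the proof of the theorem. Clearly $\tilde k(0)=0$. Differentiating $\tilde k$ and substituting $m'=G\sinh$, $G'=\sinh^{n-1}$, together with the identity $nG\cosh=\sinh^n+nm$ (which is just the definition of $m$ rearranged), the $\sinh^n$ terms cancel and I expect the computation to collapse to
$$\tilde k'(x)=2(n+1)\sinh(x)\,m(x).$$
Since the author has already verified that $m\ge 0$ on $[0,\infty)$, this gives $\tilde k'\ge 0$, hence $\tilde k\ge 0$, hence $k\ge 0$, hence $\Phi'\ge 0$; together with $\Phi(0)=0$ this yields $\Phi\ge 0$ and the lemma.

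The main obstacle is verifying the two algebraic simplifications, in particular the factorization $\Phi'=(n-2)G'k$, where the hyperbolic identity combines with the definition of $L$ to collapse many terms. Once these identities are in hand, the argument is simply three successive applications of the principle ``a function that vanishes at $0$ and has a non-negative derivative is itself non-negative'', applied in turn to $m$, to $\tilde k$, and finally to $\Phi$.
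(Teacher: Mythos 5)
Your proof is correct and follows essentially the same route as the paper: both arguments reduce the inequality to showing that $LG'-\tfrac{n}{n+2}G^2$ vanishes at the origin and has non-negative derivative, and then iterate this monotonicity principle twice more, bottoming out at $m\ge 0$. Your intermediate functions coincide with the paper's up to positive constant factors and powers of $\sinh$ (e.g.\ your $\tilde k$ equals $\tfrac{n+2}{n-2}$ times the paper's $g$, and the paper's final function $h$ is just $nm$), so the two proofs are the same computation in slightly different packaging.
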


\begin{proof}
Let $f(x) = L(x)G'(x) - c\,G^2(x),$ where $c = \dfrac{n}{n+2}.$ It suffices to show that $f\ge 0.$ 

\bigskip

As before, we write $L(x) = \dfrac{m(x)}{\sinh(x)},$ where $m(x) = G\cosh(x) - \dfrac{\sinh^n(x)}{n}$ and $m'(x) = G\sinh(x).$ Then, 
$$
f(x) = \sinh^{n-2}(x)m(x) - c\,G^2(x).
$$
Notice that since $\sinh(0) = G(0) = 0$, one has that $f(0)= 0$, so it suffices to show that $f' \ge 0$. We have that 
$$ 
f'(x) = \sinh^{n-3}(x) \left((n-2) \cosh(x)m(x) + G\sinh^2(x)(1 - 2c)\right).
$$ 
Let $\sigma  = 2c-1 = 1/p,$ where $p = (n+2)/(n-2)$ is the critical Sobolev exponent; and let $g(x)= (n-2) \cosh(x)m(x) -\sigma G\sinh^2(x)$. 
It suffices to show that $g\ge 0.$ Since $m(0)=0,$ then $g(0) = 0.$ 
\bigskip 

\noindent
Also, 
\begin{equation*}
g'(x) = 2\sinh(x)\cosh(x) G(x) \left( n-2 - \sigma\right) - \sinh^{n+1}(x) \left(\frac{(n-2)}{n}+ \sigma \right)
\end{equation*}
and in particular $g'(0) = 0$. Since 
$$ 
n-2 - \sigma  = \frac{(n-2)(n+1)}{(n+2)}
$$
and 
$$ 
\frac{(n-2)}{n}+\sigma = \frac{2(n+1)(n-2)}{n(n+2)}
$$ 
we can write
$$
g'(x) = \frac{2(n+1)(n-2)}{n(n+2)}\sinh(x)\left[ nG\cosh(x) - \sinh^n(x)\right].
$$ 
Finally, let $h(x) = nG\cosh(x) - \sinh^n(x).$ If we show $h(x) \ge0$, then we will have $g' \ge 0$, which will imply $g \ge 0$, 
and thus, that $f\ge 0,$ as desired. Notice that $h(0)=0.$ Also, since $G'(x) = \sinh^{n-1}(x),$ we have
$$h'(x) = nG\sinh(x).$$ 
That is, $h'\ge 0,$ which concludes the proof of Lemma \ref{lem:desigualdad}.
\end{proof}

\begin{remark}
In the proof of Lemma \ref{lem:desigualdad}, the constant $\sigma = 1/p$ plays a crucial role, where $p$ is the critical Sobolev exponent. It is worth noting that for small $x$ and $g$ as in the proof above, 
$$ 
g(x)   = x^{n+2} \left(\frac{1}{np} - \frac{\sigma}{n} \right) + \mathcal{O}(x^{n+4}).
$$ 
It follows that if $\sigma \le 1/p, $ then $g $ is positive in a neighborhood of the origin. 
It was this observation that led us to realize that $\sigma = 1/p$ would yield the optimal estimate.  
\end{remark}

\section{References}

\bigskip

\renewcommand{\refname}{}    %%%% for this example
\vspace*{-36pt}              %%%% file only!

\frenchspacing

\end{document}